\renewcommand{\qedsymbol}{$\blacksquare$}
\renewenvironment{proof}[1][\proofname]{\noindent{\bfseries #1.} }{\hfill \qedsymbol \medskip}
\renewcommand*{\paragraph}[1]{\smallskip\noindent\textbf{\textsf{#1}}\,\,}
\newcommand*{\arXiv}[1]{\bgroup\color{blue}\href{https://arxiv.org/abs/#1}{arXiv:#1}\egroup}
\newcommand*{\doi}[1]{\bgroup\color{blue}\href{https://doi.org/#1}{doi:#1}\egroup}
\newcommand*{\email}[1]{\bgroup\color{blue}\href{mailto:#1}{#1}\egroup}
\renewcommand*{\url}[1]{\bgroup\color{blue}\href{#1}{#1}\egroup}
\let\oldtitle\title
\renewcommand{\title}[1]{\oldtitle{#1}\newcommand{\theshorttitle}{#1}}
\newcommand{\shorttitle}[1]{\renewcommand{\theshorttitle}{#1}}
\let\oldauthor\author
\renewcommand{\author}[1]{\oldauthor{#1}\newcommand{\theshortauthor}{#1}}
\newcommand{\shortauthor}[1]{\renewcommand{\theshortauthor}{#1}}
\newcommand{\<}{\langle}
\renewcommand{\>}{\rangle}
\newcommand{\R}{\mathbb{R}}
\newcommand{\D}{\mathcal{D}}
\newcommand{\G}{\Gamma}
\newcommand*{\defeq}{\coloneqq}
\newcommand{\smk}{\mathcal{S}_m^k}
\newcommand{\simk}{\Sigma_m^k}
\newcommand{\simkm}{\Sigma_{m,m}^k}
\newcommand{\rmk}{\mathbb{R}_m^k}
\newcommand{\ga}{\gamma}
\newcommand{\vphi}{\varphi}
\newcommand*{\SO}{\mathrm{SO}}
\newcommand*{\T}{\mathrm{T}}
\newcommand{\Log}{\mathrm{Log}}
\newcommand{\Exp}{\mathrm{Exp}}
\newcommand{\ver}{\mathrm{ver}}
\newcommand{\hor}{\mathrm{hor}}
\newcommand{\Ver}{\mathrm{Ver}}
\newcommand{\Hor}{\mathrm{Hor}}
\newcommand{\rd}{\mathrm{d}}
\newcommand{\rD}{\mathrm{D}}
\newcommand{\rR}{\mathrm{R}}
\newcommand{\cA}{\mathcal{A}}
\newcommand{\cT}{\mathcal{T}}
\newcommand{\Skew}{\mathrm{Skew}}
\newcommand{\w}{\stackrel{\omega}{\sim}}
\newcommand{\ti}[1]{\tilde{#1}}
\newcommand{\wti}[1]{\widetilde{#1}}
\DeclareMathOperator{\argmin}{arg\,min}
\DeclareMathOperator{\rank}{rank}
\DeclareMathOperator{\tr}{trace}
\DeclareMathOperator{\var}{var}
\newcommand*{\quark}{\setbox0\hbox{$x$}\hbox to\wd0{\hss$\cdot$\hss}}
\newcommand{\todo}[1]{\bgroup\color{black}#1\egroup}
\newcommand{\revised}[1]{{\leavevmode\color{black}#1}}
\newcommand{\revisedx}[1]{{\leavevmode\color{black}#1}}
\DeclareRobustCommand{\ShowColormap}{\raisebox{-0.14em}{\includegraphics[height=.8em,trim={119px 0 0 0},clip]{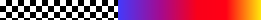}}}
\newtheorem{theorem}{\sffamily Theorem}[section]
\newtheorem{lemma}[theorem]{\sffamily Lemma}
\newtheorem{proposition}[theorem]{\sffamily Proposition}
\theoremstyle{definition}
\newtheorem{remark}[theorem]{\sffamily Remark}
\newcommand*{\affilref}[1]{\ref{affiliation#1}}
\newcommand*{\affiliation}[3]{
	\footnotetext[#1]{\label{affiliation#2} #3}
}
\begin{document}

\title{Geodesic analysis in Kendall's shape space\\with epidemiological applications\footnote{This is a preprint version of an article to appear in the \emph{Journal of Mathematical Imaging and Vision} {\copyright} Springer with the DOI \href{https://doi.org/10.1007/s10851-020-00945-w}{10.1007/s10851-020-00945-w} and differs from the final published version in layout and  typographical detail.}}
\shorttitle{Geodesic analysis in Kendall's shape space with epidemiological applications}

\author{%
	Esfandiar Nava-Yazdani\textsuperscript{\affilref{ZIB}}
	\and
	Hans-Christian Hege\textsuperscript{\affilref{ZIB}}
	\and
	T.~J.~Sullivan\textsuperscript{\affilref{ZIB},\affilref{FUB}}
	\and
	Christoph von Tycowicz\textsuperscript{\affilref{ZIB}}
}
\shortauthor{E.\ Nava-Yazdani, H.-C.\ Hege, T.\ J.\ Sullivan, and C.\ von Tycowicz}

\date{\today}

\maketitle

\affiliation{1}{ZIB}{Zuse Institute Berlin, Takustra{\ss}e 7, 14195 Berlin, Germany. (\email{navayazdani@zib.de}, \email{hege@zib.de}, \email{sullivan@sib.de}, \newline \email{vontycowicz@zib.de})}
\affiliation{2}{FUB}{Freie Universit{\"a}t Berlin, Arnimallee 6, 14195 Berlin, Germany. (\email{t.j.sullivan@fu-berlin.de})}

\begin{abstract}
\paragraph{Abstract:}
We analytically determine Jacobi fields and parallel transports and compute geodesic regression in Kendall's shape space.
Using the derived expressions, we can fully leverage the geometry via Riemannian optimization and thereby reduce the computational expense by several orders of magnitude \revisedx{over common, nonlinear constrained approaches}.
The methodology is demonstrated by performing a longitudinal statistical analysis of epidemiological shape data.
As an example application we have chosen 3D shapes of knee bones, reconstructed from image data of the Osteoarthritis Initiative (OAI).
Comparing subject groups with incident and developing osteoarthritis versus normal controls, we find clear differences in the temporal development of femur shapes.
This paves the way for early prediction of incident knee osteoarthritis, using geometry data alone.

\paragraph{Keywords:}
Longitudinal modeling $\bullet$ Shape trajectory $\bullet$ Riemannian metric $\bullet$ Principal geodesic analysis $\bullet$ Geodesic regression $\bullet$ Parallel transport $\bullet$ Jacobi fields
\end{abstract}

\section{Introduction}
\label{sec:intro}

In recent years, there has been an increased interest in statistical analysis of geometric shapes.
Such analyses are especially often performed in the field of morphometry, but mostly for static forms.
A frequently-encountered situation, however, is that instead of a set of discrete shapes, series of shapes are given, often together with co-varying parameters.
For example, longitudinal imaging studies track biological shape changes over time within and across individuals to gain insight into dynamical processes such as ageing or disease progression.
Statistical modeling and analysis of shapes is of critical importance for \revised{a} better understanding of such temporal shape data.


The main challenge is that shape variability is inherently nonlinear and high-dimensional, so that classical statistical approaches are not always appropriate.
One way to address this is linearization.
The quality of the resulting statistical model, however, then depends strongly on the validity of the linearity assumption, i.e.\ that the observed data points lie to a good approximation in a flat Euclidean subspace.
Since the natural variability in populations often leads to a large spread in shape space and the observed data may lie in highly-curved regions (see \citet{HH2014}), linearity often cannot be assumed in practical applications.

In the context of longitudinal studies, an important task is to estimate continuous trajectories from sparse and potentially noisy samples.
For smooth individual biological changes, subject-specific spatiotemporal regression models are adequate.
They also provide a way to describe the data at unobserved times (i.e.\ shape changes between observation times and --- within certain limits --- also at future times) and to compare trends across subjects in the presence of unbalanced data (e.g.\ due to drop-outs).
One approach in use is to approximate the observed temporal shape data by geodesics in shape space and, based on these, to estimate overall trends within groups.
Geodesic models are attractive as they feature a compact representation (similar to the slope and intercept term in linear regression) and therefore allow for computationally efficient inference.

\revisedx{The intrinsic theory of least squares and geodesic regression in shape spaces has been introduced in \citet{F2013}.
For the derivation of the corresponding Euler--Lagrange equations for some important manifolds, we refer to \citet{SM2006}.
An extension to intrinsic Riemannian polynomials has been considered in \citet{Hinkle2014}.
Earlier related results in the framework of large deformation diffeomorphic metric mapping (LDDMM) can be found in \citet{qiu2008parallel} and \citet{qiu2009time}. 
In \citet{banerjee2016nonlinear}, the authors present a kernel-based generalization of geodesic regression to manifold-valued longitudinal parameters.
For an overview of statistical analysis on Riemannian manifolds see \citet{HH2014} and \citet{P2006}.}

An additional challenge in the analysis of shape trajectories is to distinguish between morphological differences due to (i) temporal shape evolutions of a single individual and (ii) the geometric variability in a population of an object class under study.
To obtain a statistically significant localization of structural changes at the population level (group-wise statistics), the subject-specific trajectories need to be transferred in a standard reference frame.
Among the different techniques proposed for normalizing longitudinal deformations~\citep{rao2004spatial,bossa2010changing}, constructions based on \emph{parallel transport} provide the most natural approach 
and have shown superior sensitivity and stability in the context of diffeomorphic registration~\citep{Lorenzi2011}.
Note also that, for general trajectories, the simple transport of each shape is not suitable because the distances between the shapes are not preserved.
However, if the shapes belong to the same geodesic, this problem does not arise, which is another advantage of geodesic regression.

As parallel transport in curved shape spaces is rarely given in closed form, in general it has to be approximated numerically, e.g.\ employing Schild's ladder~\citep{Lorenzi2011} \revised{for fanning~\citep{louis2018fanning}}.
For shapes in 2D, Kendall's shape space is isomorphic to the projective space, which is a symmetric space, so that the essential geometric quantities are well known (cf.\ \citet{HHM10} and \citet{F2013}).
However, for three and more dimensions, because of less restrictive structure, many questions remain open.
\revised{Utilizing closed form expressions of the pre-shape sphere, we reduce parallel transport to the solution of a homogeneous first-order differential equation that allows for highly efficient computations.
Moreover, we reduce the important case of parallel transport along a geodesic path to the solution of a low-dimensional equation that only depends on the dimension of the ambient space and not on the spatial resolution of the discrete representation.}



\revised{
The paper is organized as follows.
In \Cref{sec:geometry}, after a short overview of Kendall's shape space, we provide a computationally efficient approach (via the so-called Sylvester equation) for the canonical decomposition of tangent vectors into horizontal and vertical components, which is essential for the geometry and analysis of shapes and trajectories.
Moreover, we determine parallel transport and Jacobi fields, which will be employed for geodesic regression.
Parallel transport is essential for statistical normalization, alignment of trajectories and also computation of Jacobi fields.
The latter describes the variability of trajectories that will be modeled as best-fitting geodesics in \Cref{sec:regression}, where we also present our algorithm for the computation of geodesic regression.
In \Cref{sec:epidemiology} we apply this algorithm to yield longitudinal statistical analysis of femur data from an epidemiological study dealing with osteoarthritis and discuss the numerical results.
}

\section{Geodesic Analysis in Shape Space}
\label{sec:geometry}

A \emph{pre-shape} is a $k$-ad of landmarks (i.e.\ particular points) in $\R^m$ after removing translations and similarity transformations.
A \emph{shape} is a pre-shape with rotations removed.
For a comprehensive introduction to Kendall's shape space and details on the subjects of this section, we refer to \citet{KBC1999}.
For the relevant tools from Riemannian geometry, we refer to \citet{GHL2004}.

\subsection{Shape Space}

In the following we present a brief overview of Kendall's shape space, provide a computationally efficient method to determine horizontal and vertical 
components of tangent vectors of the pre-shape space, and also prove the corresponding equivariance \revised{under rotations}.

Let $x\in M(m,k)$, where $M(m,k)$ denotes the space of real $m\times k$ matrices. \revised{Denoting the columns of $x$ by $x_i$ and their Euclidean mean by $\bar{x}$, in order to remove translations, 
we replace $x_i$ by $x_i-\bar{x}$.} The result $\rmk \defeq \{x\in M(m,k):\: \sum_{i=1}^kx_i=0\}$, identified with $M(m,k-1)$, 
will be endowed with its canonical scalar product given by $\<x,y\>=\tr(xy^t)$.
Denoting the Frobenius norm by $\| \quark \|$, we call the sphere $\smk \defeq \{x\in \rmk:\: \|x\|=1\}$ \emph{pre-shape space} and endow it with the spherical Procrustes metric $d(x,y) \defeq \arccos (\< x,y\>)$.
Now, the left action of $\SO_m$ on $\smk$ given by $(R,x)\mapsto Rx$ defines an equivalence relation given by $x\sim y$ if and only if $y=Rx$ for some $R\in \SO_m$.
\emph{Kendall's shape space} is defined as $\simk=\smk/\mathord{\sim}$.
Provided that $k\geq m+1$, the dimension of $\simk$ is $m(k-1)-\frac{1}{2}m(m-1)-1$.
Now, denoting the canonical projection of $\sim$ by $\pi$, the induced distance between any two shapes $\pi(x)$ and $\pi (y)$ is given by
\[
	d_{\Sigma}(x,y) \defeq \min_{R\in \SO_m} d(x,Ry)=\arccos \sum_{i=1}^m\lambda_i
\]
where $\lambda_1\geq \cdots\geq \revised{\lambda_{m-1}\geq}  |\lambda_m|$ denote the pseudo-singular values of $yx^t$.
Denoting $\D_j \defeq \{x\in \smk:\:\rank(x)\leq j\}$, it turns out that $\simkm \defeq \simk\setminus \pi(\D_{m-2})$ inherits a differential structure that is compatible with its quotient topology.
Following \citet{KBC1999}, we refer to $\pi(\D_{m-2})$ as the singular part of $\simk$.
In particular, $\simk$ is a strata of manifolds with varying dimensions and $\simkm$ is open and dense in $\simk$.
Away from the singular part, the quotient map $\pi$ is a Riemannian submersion \revised{with respect to the metric induced by the ambient Euclidean space}.
Moreover, for $k\geq 3$, the shape space $\Sigma_1^k$ (resp.\ $\Sigma_2^k$) is isometric to the sphere (resp.\ projective space).
We call $x,y\in \smk$ \emph{well positioned}, and write $x\w y$, if and only if $yx^t$ is symmetric and $d(x,y)=d_{\Sigma}(x,y)$.
For each $x,y\in \smk$, there exists an optimal rotation $R\in \SO_m$ such that $x\w Ry$.
Note that $R$ does not need to be unique.
Let \revised{$\mathbb{U}$} denote a neighborhood in $\smk$ with radius smaller then $\pi/4$ (the diameter of $\simk$ is $\pi/2$) such that
\begin{align*}
	\lambda_{m-1}+\lambda_m> 0\:\text{ for all }x,y\in \mathbb{U}.
\end{align*}
For $x,y\in \mathbb{U}$ the optimal rotation $R$ is unique and the function
\[
\smk\ni y\mapsto\omega(x,y) \defeq Ry
\]
is well-defined.

\revised{We recall that, for a Riemannian submersion $f \colon M\to N$ and $y\in N$, $f^{-1}(y)$ is a submanifold of $M$. 
For any $x\in M$, denoting the kernel of $\rd_x f$ by $\Ver_x$, the tangent space $\T_x M$ to $M$ at $x$ admits an orthogonal decomposition 
$\T_x M=\Hor_x\oplus \Ver_x$ where $\Hor_x$ and the $\Ver_x$ are the so-called \emph{horizontal} and \emph{vertical} subspaces.} Due to \citet{KBC1999} 
the vertical space at $x\in \smk$ is given by
\[
	\Ver_x=\{Ax:\:A+A^t=0\} ,
\]
and the horizontal space is given by
\[
	\Hor_x=\{u\in M(m,k-1):\:ux^t=xu^t\text{ and } \<x,u\>=0\}.
\]
We denote the vector space of $m\times m$ skew-symmetric real matrices by $\Skew_m$.
Thus $\Ver_x=\Skew_m\cdot x$.
\revised{Furthermore, a smooth curve is called \emph{horizontal} if and only if its tangent field is horizontal.
Geodesics in the shape space are equivalence classes of horizontal geodesics.}
Now, let $\exp$ and $\log$ denote the exponential and logarithm map of the pre-shape space.
For $x\w y$ the geodesic from $x$ to $y$ given by
\begin{equation}
	\label{eqphi}
	\Phi(t,x,y) \defeq \exp_x(t\log_x y)=\frac{\sin((1-t)\vphi)}{\sin\vphi}x+\frac{\sin(t\vphi)}{\sin\vphi}y
\end{equation}
with $\vphi=\arccos(\<x,y\>),\:0\leq t\leq 1$, is horizontal.
Hence $\Phi$ realizes the minimizing geodesic from $\pi(x)$ to $\pi(y)$.
The following result concerns determination and $\SO_m$-equivariance for horizontal and vertical projection.
\begin{lemma}
	\label{lemprj}
	Fix $x\in \smk$ and $w\in \T_x\smk$.
	Let $\ver_x$ resp. $\hor_x$ denote the restriction of vertical resp. horizontal projection to $\T_x\smk$.
	\begin{enumerate}
		\renewcommand{\labelenumi}{(\alph{enumi})}
		\item $\ver_x(w)=Ax$ if and only if $A$ solves the Sylvester equation
		\begin{equation}
			\label{eqsylvester}
			Axx^t+xx^tA=wx^t-xw^t.
		\end{equation}
		\revised{Moreover, the above equation has a unique skew-symmetric solution if $\rank(x)\geq m-1$}.
		\item Fix $R\in \SO_m$.
		Then $\ver_{Rx}(Rw)=R\ver_x(w)$ \revised{and $\hor_{Rx}(Rw)=R\hor_x(w)$}.
	\end{enumerate}
\end{lemma}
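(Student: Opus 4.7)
The plan is to prove (a) by parametrizing vertical vectors as $Ax$ with $A \in \Skew_m$ and translating horizontality of the residual $w - Ax$ into a matrix equation. Writing $\ver_x(w) = Ax$, the condition $w - Ax \in \Hor_x$ requires $(w-Ax)x^t$ to be symmetric, which, using $A^t = -A$, rearranges exactly to \eqref{eqsylvester}. The orthogonality $\langle x, w-Ax\rangle = 0$ needed to keep $w - Ax$ tangent to $\smk$ is automatic, since $\langle x, w\rangle = 0$ (as $w \in \T_x\smk$) and $\langle x, Ax\rangle = \tr(xx^t A^t) = 0$ (trace of a symmetric matrix against a skew one). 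Conversely, any skew $A$ satisfying \eqref{eqsylvester} renders $w - Ax$ horizontal, so orthogonality of $\Ver_x$ and $\Hor_x$ forces $\ver_x(w) = Ax$.

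For the uniqueness claim, I would diagonalize $xx^t = U D U^t$ with $D = \mathrm{diag}(d_1, \ldots, d_m)$, $d_i \geq 0$, $U \in \SO_m$. Setting $\widetilde A \defeq U^t A U$ and $\widetilde N \defeq U^t(wx^t - xw^t) U$, equation \eqref{eqsylvester} becomes the entrywise system $(d_i + d_j)\widetilde A_{ij} = \widetilde N_{ij}$. Skew-symmetry of $\widetilde A$ already forces its diagonal to vanish, matching the diagonal of $\widetilde N$, which vanishes because $\widetilde N$ is skew. Off-diagonal entries are uniquely determined whenever $d_i + d_j > 0$. Under $\rank(x) \geq m-1$, at most one $d_i$ vanishes, so $d_i + d_j > 0$ for all $i \neq j$; hence every entry of $\widetilde A$ (and therefore $A$) is uniquely determined.

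Part (b) is then immediate by conjugation. Setting $B \defeq R A R^t$, one checks that $B \in \Skew_m$ and, using $(Rx)(Rx)^t = R xx^t R^t$ together with $(Rw)(Rx)^t - (Rx)(Rw)^t = R(wx^t - xw^t) R^t$, that
\[
B\,(Rx)(Rx)^t + (Rx)(Rx)^t B = R\bigl(A xx^t + xx^t A\bigr)R^t = R(wx^t - xw^t) R^t ,
\]
i.e., $B$ is a skew-symmetric solution of the Sylvester equation for the pair $(Rx, Rw)$. By the iff in (a), $\ver_{Rx}(Rw) = B(Rx) = R A R^t \cdot R x = R A x = R \ver_x(w)$. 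Horizontal equivariance then follows at once from $\hor_{Rx}(Rw) = Rw - \ver_{Rx}(Rw) = R\bigl(w - \ver_x(w)\bigr) = R\hor_x(w)$.

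The main obstacle I anticipate is the rank-$(m-1)$ boundary case in (a): since $xx^t$ is then singular, the classical Sylvester solvability theorem does not apply directly, and one has to verify via the eigen-decomposition both that a skew-symmetric solution exists and that the skew-symmetry constraint precisely eliminates the potential one-dimensional kernel of the linear map $A \mapsto A xx^t + xx^t A$ acting on $\Skew_m$.
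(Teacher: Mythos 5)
Your proof is correct. The derivation of the Sylvester equation \eqref{eqsylvester} from horizontality of the residual $w-Ax$, the automatic tangency check $\<x,Ax\>=0$, and the conjugation argument $B=RAR^t$ for part (b) all coincide with the paper's proof. Where you genuinely diverge is in establishing existence and uniqueness of the skew-symmetric solution: the paper rotates $x$ into the block form $\binom{x_1}{0}$ with $\rank(x_1)=j$ and solves block-wise (an invertible Sylvester equation for $A_1$, a linear system for the coupling block $a$, and an arbitrary $A_0\in\Skew_{m-j}$ that is forced to vanish when $j=m-1$), whereas you diagonalize $xx^t=UDU^t$ and read off the entrywise system $(d_i+d_j)\widetilde{A}_{ij}=\widetilde{N}_{ij}$. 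Your route is arguably cleaner and makes the role of the rank hypothesis transparent (at most one zero eigenvalue forces $d_i+d_j>0$ off the diagonal), and it settles existence and uniqueness in one stroke, since skew-symmetry of $\widetilde{N}$ propagates to the solution $\widetilde{A}$; in particular, the rank-$(m-1)$ ``boundary case'' you flag as the main anticipated obstacle is already fully resolved by your own computation and is not a gap. What the paper's block decomposition buys in exchange is an explicit description of the entire solution set at lower rank: the free block $A_0$ acts only on the zero rows of $x$, so $Ax$ --- and hence the vertical projection --- remains well defined even when $\pi(x)$ lies in the singular part, a fact the paper exploits in the remark following the lemma. Your spectral argument could recover the same observation (free entries of $\widetilde{A}$ occur only where $d_i=d_j=0$, i.e., in the kernel of $x^t$), but you do not state it; since the lemma only asserts uniqueness for $\rank(x)\geq m-1$, nothing is missing.
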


\begin{proof}
	For (a), let $\ver_x(w)=Ax$, i.e., $w=u+Ax$ with $ux^t$ symmetric and $A\in \Skew_m$.
	A straightforward computation eliminating $ux^t$ implies that \eqref{eqsylvester} holds.
	To prove the converse, let $j \defeq \rank(x)$.
	Suppose without loss of generality that $j>1$ and write $x=\binom{x_1}{0}$ with
	\[
		\rank(x_1)=j,\:w=\binom{w_1}{w_0} ,
	\]
	where $w_1$ is $j\times k$.
	\revised{We observe that both equations $A_1x_1x_1^t+x_1x_1^tA_1=w_1x_1^t-x_1w_1^t$ and $a^tx_1x_1^t=-w_0x_1^t$ are uniquely solvable, since $x_1x_1^t$ is invertible.
	Furthermore, the solution of the first equation is skew-symmetric, since its right-hand side is skew-symmetric. It follows that
	\[
	A=\begin{pmatrix}A_1 & a\\ -a^t & A_0\end{pmatrix}
	\]
	with $A_0\in \Skew_{m-j}$ arbitrary, is skew-symmetric and solves the Sylvester equation \eqref{eqsylvester} which also implies that
	$(w-Ax)x^t$ is symmetric. Hence $Ax$ is the vertical component of $w$.
	If $\rank(x)=m-1$, then $A_0=0$. If $x$ has full rank, then $A=A_1$}.
	
	For (b) note that $\<Rw,Rx\>=\<w,x\>=0$, i.e., $w\in \T_xS$ implies $Rw\in \T_{Rx}S$.
	Now, $\ver_{Rx} (Rw)=BRx$ where $B$ is the solution of \revised{$BRxx^tR^t+Rxx^tR^tB=R(wx^t-xw^t)R^t$}.
	Hence $B=RAR^t$, which implies that $\ver_{Rx}(Rw)=R.\ver_x (w)$ \revised{and $\hor_{Rx}(Rw)=R\hor_x(w)$}.
\end{proof}

Henceforth the superscript $v$ (resp.\ $h$) denotes the vertical (resp.\ horizontal) component, i.e., for any $w\in\rmk$ we have the orthogonal decomposition $w=\<w,x\>x+w^h+w^v$.
Due to the explicit computation above, $(R.w)^v=R.w^v$ and $(R.w)^h=R.w^h$, i.e., horizontal and vertical projections are $\SO_m$-equivariant.
Note that this property holds even if $\pi (x)$ belongs to the singular part of the shape space.
As appropriate for our applications and for brevity, unless otherwise specified, we restrict our data to the
open and dense set $S \defeq \{x\in \smk:\:\rank(x)\geq m-1\}$ on which $\pi$ is a Riemannian submersion, thus the geometry of the shape space is mainly described by its horizontal lift in the pre-shape space.
In particular, for $x\in S$ the Sylvester equation \eqref{eqsylvester} has a unique solution determining horizontal and vertical projections and the restriction of $\rd_x\pi$ to $\Hor_x$ 
is an isometry of Euclidean vector spaces $\Hor_x$ and $\T_{\pi(x)}\simkm$.
Denoting the covariant derivatives in the pre-shape and shape space by $\nabla$ resp.\ $\ti{\nabla}$, for horizontal vector fields $X$ and $Y$ we have
\[
	(\ti{\nabla}_{\rd \pi X} \rd \pi Y) \circ \pi = \rd \pi(\nabla_X Y).
\]
In the following $[ \quark , \quark ]$ denotes the Lie bracket in $\rmk$, i.e., $[U,V]= \rD V(U)- \rD U(V)$ ($\rD$ Euclidean).
For the Euclidean derivative of a vector field $W$ along a curve $\ga$ in $\rmk$ we
use $\frac{\rD}{\rd t}$ and also for simplicity of notation a dot, i.e., $\nabla_{\dot{\ga}}W=\dot{W}-\<\dot{W},\ga\>\ga$ if $\|\ga\|=1$, and
$\frac{\rD^2W}{\rd t^2}=\ddot{W}$, etc.
We set\footnote{Note that the Riemannian exponential map of the shape space denoted by $\wti{\exp}$ satisfies
$\pi(\exp_x u)=\wti{\exp}_{\pi(x)}(\rd_x\pi(u))=\wti{\exp}_{\pi(x)}(\rd_x\pi(u^h))$.}
\[
	\Log_xy \defeq \log_x\omega(x,y),\:\Exp_xu \defeq \exp_x u^h,\:u\in \T_x\smk.
\]
For the computation of the Fr{\'e}chet mean (cf.~\citet{HHM10} and \citet{P2006}) $\pi(\bar{q})$ of the
shapes $\pi(q_1),\cdots,\pi(q_N)$ with $q_i\in \revised{\mathbb{U}}$, i.e.,
\begin{align}
	\label{eq:mean_objective}
	\bar{q} & \defeq \argmin_{x} G(x),\: G(x)  \defeq \sum_{i=1}^N d_{\Sigma}^2(x,q_i) ,
\end{align}
we apply Newton's method to Karcher's equation $\sum_{i=1}^N\Log_xq_i=0$ as follows.
We search for the unique zero $\bar{q}$ of the function $f$ defined by
\[
	f(x)=\sum_{i=1}^N\Log_xq_i,\:x\in \mathbb{U} ,
\]
and set
\[
	x_{k+1}=\Exp_{x_k}(-(\rd_{x_k}f)^{-1}f(x_k)).
\]
A suitable initial value is the normalized Euclidean mean
\[
	x_0=\frac{1}{\|\sum_{i=1}^Nq_i\|}\sum_{i=1}^Nq_i.
\]
The total variance of $q=(q_1,\cdots,q_N)$ reads
\[
	\var(q)=\frac{1}{N}G(\bar{q})=\frac{1}{N}\sum_{i=1}^N\|\Log_{\bar{q}}q_i\|^2.
\]

\subsection{Parallel Transport}
Next, we \revised{derive formulas for} parallel transport in the shape space and its relation to parallel transport in the pre-shape space.\footnote{Essentially, part (a) of \Cref{proppt} was recently also obtained by \citet{KDL2018}.}

We call a vector field $W$ along a horizontal curve $\ga$ \emph{horizontally parallel} (for brevity \emph{h-parallel}) if and only if $W$ is horizontal
and $\rd \pi W$ is parallel along $\pi\circ \ga$. In the following, we derive the differential equation for the h-parallelism of $W$ and a corresponding
constructive approach using a Sylvester equation in certain cases.
\begin{proposition}
	\label{proppt}
	Let $\ga :[0,\tau]\to S$ be a smooth horizontal curve with initial velocity $v$, $u$ a horizontal vector at $x \defeq \ga(0)$ and $W$ a vector field along $\ga$ with $W(0)=u$.
	\begin{enumerate}
		\renewcommand{\labelenumi}{(\alph{enumi})}
		\item The vector field $W$ is h-parallel transport of $u$ \revisedx{along $\ga$} if and only if $\dot{W}=A\ga-\<W,\dot{\ga}\>\ga$ where $A$ is the unique solution of
		\begin{equation}
			\label{eqpt1}
			A\ga\ga^t+\ga\ga^tA=\dot{\ga}W^t-W\dot{\ga}^t.
		\end{equation}
		\item Suppose that $\ga$ is a unit-speed geodesic.
		Then equation \eqref{eqpt1} reduces to
		\begin{equation}
			\label{eqpt2}
			\dot{A}\ga\ga^t+\ga\ga^t\dot{A}+3(A\dot{\ga}\ga^t+\ga\dot{\ga}^tA)=0.
		\end{equation}
		\item Let $Cv$ denote the orthogonal projection of $u$ on $\Skew_m\cdot v$, i.e.\ $Cvv^t+vv^tC=uv^t-vu^t$.
		Suppose that $C\dot{\ga}$ is horizontal. If $\ga$ is a unit-speed geodesic, then the h-parallel transport of $u$ is given by
		\begin{equation}
			\label{eqptmain}
			W=U+(\<u,v\>+C)(\dot{\ga}-v)
		\end{equation}
		where $U$ denotes the Euclidean parallel extension of $u$ along $\ga$, i.e., $U(t)=u$ f.a.\ $t$. If $y = \ga (\varphi)$ with $\varphi=d (x,y)$, then
		the h-parallel transport $W_y$ of $u$ along $\ga$ to $y$ reads
		\begin{equation}
			\label{eqptmain1}
			W_y = U- 2\frac{\<u,y\>+C\sin(\varphi)}{\|x+y\|^2}(x+y)
		\end{equation}
	\end{enumerate}
\end{proposition}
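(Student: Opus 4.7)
The plan is to handle the three parts in sequence, lifting everything to the pre-shape sphere and exploiting that $\pi$ is a Riemannian submersion, so that parallel transport in $\simk$ corresponds to a horizontal lift $W$ along $\ga$ whose pre-shape covariant derivative is vertical.

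For part (a), I would start from this submersion characterization. On the unit sphere, the spherical covariant derivative is $\nabla_{\dot\ga}W=\dot W+\<W,\dot\ga\>\ga$ (using $\<W,\ga\>=0$), so writing verticality as $\nabla_{\dot\ga}W=A\ga$ with $A\in\Skew_m$ already produces the claimed ODE. To identify $A$ via \Cref{lemprj}, I would differentiate the horizontality identity $W\ga^t=\ga W^t$ to get $\dot W\ga^t-\ga\dot W^t=\dot\ga W^t-W\dot\ga^t$, which is exactly the right-hand side of the Sylvester equation \eqref{eqsylvester} applied to $\nabla_{\dot\ga}W$; uniqueness of a skew solution on $S$ (from \Cref{lemprj}) pins down $A$.

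For part (b), I would differentiate \eqref{eqpt1} in $t$. The terms coming from $\ddot\ga$ cancel: the sphere geodesic equation gives $\ddot\ga=-\ga$, so $\ddot\ga W^t-W\ddot\ga^t=-\ga W^t+W\ga^t=0$ by horizontality. Substituting $\dot W=A\ga-\<W,\dot\ga\>\ga$ from (a) into $\dot\ga\dot W^t-\dot W\dot\ga^t$, the scalar piece $\<W,\dot\ga\>(\ga\dot\ga^t-\dot\ga\ga^t)$ vanishes because $\dot\ga$ being horizontal along the geodesic gives $\ga\dot\ga^t=\dot\ga\ga^t$. Collecting terms and using this identity once more reduces the equation to \eqref{eqpt2}.

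For part (c), the strategy is direct verification that $W=U+(\<u,v\>+C)(\dot\ga-v)$ is the h-parallel transport. Three checks are needed: (i) $W(0)=u$ is immediate from $\dot\ga(0)=v$; (ii) $W$ is horizontal; and (iii) $\nabla_{\dot\ga}W$ is vertical. Step (iii) is quick once (ii) is established: differentiation with $\ddot\ga=-\ga$ gives $\dot W=-(\<u,v\>+C)\ga$, and checking that $\<W,\dot\ga\>\equiv\<u,v\>$ is constant along $\ga$ yields $\nabla_{\dot\ga}W=-C\ga\in\Skew_m\cdot\ga$. The equivalent form \eqref{eqptmain1} then follows by evaluating at $t=\varphi$, plugging in $y=\cos(\varphi)\,x+\sin(\varphi)\,v$ and $\dot\ga(\varphi)=-\sin(\varphi)\,x+\cos(\varphi)\,v$, and simplifying via $\|x+y\|^2=2(1+\cos\varphi)$, the half-angle identity $\tan(\varphi/2)\sin\varphi=1-\cos\varphi$, and $\<u,y\>=\sin(\varphi)\<u,v\>$ (using $\<u,x\>=0$).

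The hard part will be the horizontality check (ii). Expanding $W\ga^t-\ga W^t$ with $\ga=\cos(t)\,x+\sin(t)\,v$ produces several cross-terms; most vanish from horizontality of $u$ and $v$ at $x$ and from $\ga\dot\ga^t=\dot\ga\ga^t$, but the $C$-dependent pieces only collapse after two uses of the side hypothesis ``$C\dot\ga$ horizontal'': once directly to cancel $C\dot\ga\ga^t+\ga\dot\ga^tC$, and once evaluated at $t=0$ (where $\dot\ga(0)=v$) to deduce that $Cv$ itself is horizontal at $x$, giving $Cvx^t+xv^tC=0$. Combined with the defining relation $Cvv^t+vv^tC=uv^t-vu^t$, this is what forces the full cancellation and makes the ansatz work.
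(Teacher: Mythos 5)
Your treatment of part (b) and of part (c) is sound, and for (c) you actually go a different (and more self-contained) route than the paper: the paper merely checks that the ansatz \eqref{eqptmain} satisfies the ODE of parts (a)--(b) with $A\equiv -C$, whereas you verify the definition of h-parallelism directly (horizontality of $W$ plus verticality of $\nabla_{\dot\ga}W$). Your two uses of the hypothesis that $C\dot\ga$ is horizontal --- once to kill $C\dot\ga\ga^t+\ga\dot\ga^tC$ and once at $t=0$ to get $Cvx^t+xv^tC=0$, which combined with $Cvv^t+vv^tC=uv^t-vu^t$ cancels the $\sin t\,(uv^t-vu^t)$ term coming from $u\ga^t-\ga u^t$ --- are exactly what is needed, and your derivation of \eqref{eqptmain1} matches the paper's substitution $v=(y-x\cos\varphi)/\sin\varphi$.

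The genuine gap is in part (a): you prove only one direction of the equivalence. Starting from ``$W$ is h-parallel'' you legitimately use $\<W,\ga\>=0$ and $W\ga^t=\ga W^t$ to rewrite $\nabla_{\dot\ga}W=\dot W+\<W,\dot\ga\>\ga$ and to convert the Sylvester right-hand side $\dot W\ga^t-\ga\dot W^t$ into $\dot\ga W^t-W\dot\ga^t$. But the converse --- that a solution of $\dot W=A\ga-\<W,\dot\ga\>\ga$ with $A$ solving \eqref{eqpt1} and horizontal initial value $u$ \emph{remains} horizontal, and hence is the h-parallel transport --- is not addressed at all, and it is the half that makes the proposition usable (it is what licenses computing the transport by integrating \eqref{eqpt1}, and it is what part (c) and Algorithm~2 rely on). The point is that \eqref{eqpt1} is deliberately stated with right-hand side $\dot\ga W^t-W\dot\ga^t$ rather than $\dot W\ga^t-\ga\dot W^t$; these agree only \emph{after} horizontality is known, so horizontality must be propagated, not assumed. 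The paper does this by introducing $f\defeq\|W\ga^t-\ga W^t\|^2+\<W,\ga\>^2$ and showing $\dot f=0$ from the ODE (using $\dot W\ga^t-\ga\dot W^t=A\ga\ga^t+\ga\ga^tA=\dot\ga W^t-W\dot\ga^t$ and $\<\dot W,\ga\>+\<W,\dot\ga\>=\<A\ga,\ga\>=0$, the latter because $A$ is skew and $\ga\ga^t$ symmetric), whence $f\equiv f(0)=0$. Some conserved-quantity argument of this kind must be added for your proof of the ``if'' direction to close.
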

\begin{proof}
	(a) \revisedx{$\rd \pi W$ is parallel along $\pi\circ\ga$} if and only if $\rd \pi(\nabla_{\dot{\ga}}W)=0$, i.e., infinitesimal variation of $W$ must be vertical.
	Hence $\nabla_{\dot{\ga}}W=(\nabla_{\dot{\ga}}W)^v$, which due to \Cref{lemprj} equals
	$A\ga$ with $A\ga\ga^t+\ga\ga^t A=(\nabla_{\dot{\ga}}W)\ga^t-\ga(\nabla_{\dot{\ga}}W)^t=\dot{W}\ga^t-\ga\dot{W}^t$.
	Moreover, $\SO_m$-equivariance of vertical projection implies the well-definedness, i.e., if $\rd \pi W$ is parallel, then $\rd \pi(Rw)$
	is parallel for all $R\in \SO_m$.
	Note that existence and uniqueness of the solution for \revised{\eqref{eqpt1}} with $W(0)=u$ is immediate from the existence and uniqueness of parallel transport and vertical projection.
	Now, $W$ is horizontal if and only if $\dot{f}=0$ where $f \defeq \|W\ga^t-\ga W^t\|^2+\<W,\ga\>^2$, since $f(0)=0$.
	If \revisedx{equation} \eqref{eqpt1} \revisedx{holds},
	then
	\[
		\dot{W}\ga^t-\ga\dot{W}^\revised{t} = (\nabla_{\dot{\ga}}W)\ga^t-\ga(\nabla_{\dot{\ga}}W)^t = A\ga\ga^t+\ga\ga^tA=\dot{\ga}W^t-W\dot{\ga}^t
	\]
	and
	\[
		\<\dot{W},\ga\>+\<W,\dot{\ga}\> = \<A\ga-\<W,\dot{\ga}\>\ga,\ga\>+\<W,\dot{\ga}\> = \<A\ga,\ga\>=0.
	\]
	\revised{The last equation follows from the fact that $A$ is skew-symmetric and $\ga\ga^t$ is symmetric, hence their product is trace-free.} Now, we arrive at
	$f=0$, i.e., $W$ remains horizontal. To prove the converse, note that if $W$ is horizontal, then $f$ and therefore $\dot{f}$ vanishes. Hence
	$\dot{W}\ga^t-\ga\dot{W}^t=\dot{\ga}W^t-W\dot{\ga}^t$ and $\<W,\dot{\ga}\>+\<\dot{W},\ga\>=0$ and the Sylvester equation for the
	vertical component of $\dot{W}$ reads $A\ga\ga^t+\ga\ga^tA=\dot{\ga}W^t-W\dot{\ga}^t$.
	Thus \eqref{eqpt1} follows.
	
	(b) Note that $W\ga^t$ and $\dot{\ga}\ga^t$ are symmetric and $\ddot{\ga}+\ga=0$. \revised{Thus $W\ddot{\ga}^t=-W\ga^t$ is also symmetric.}
	Now, \eqref{eqpt1} implies
	\begin{align*}
		\dot{A}\ga\ga^t+\ga\ga^t\dot{A}+2(A\dot{\ga}\ga^t+\ga\dot{\ga}^tA)
		&=\ddot{\ga}W^t-W\ddot{\ga}^t+\dot{\ga}\dot{W}^t-\dot{W}\dot{\ga}^t\\
		&=\dot{\ga}(A\ga-\<W,\dot{\ga}\>\ga)^t -(A\ga-\<W,\dot{\ga}\>\ga)\dot{\ga}^t\\
		&=-(A\ga\dot{\ga}^t+\dot{\ga}\ga^tA).
	\end{align*}
	
	(c) Obviously $W$ given by \eqref{eqptmain} satisfies the initial condition $W(0)=u$. Moreover, it satisfies $\dot{W}=-C\ga-\<W,\dot{\ga}\>\ga$, i.e.,
	\eqref{eqpt2} holds with \revised{$A(t)=-C$}. To prove \eqref{eqptmain1}, insert
	$v=\frac{1}{\varphi}\log_xy=\frac{y-x\cos (\varphi )}{\sin(\varphi)}$ and $\dot{\ga}=\frac{-1}{\varphi}\log_yx$ into \eqref{eqptmain}.
\end{proof}

Note that, \revised{due to skew-symmetry of $\ga\ga^t(\nabla_{\dot{\ga}}W)\ga^t$}, the differential equation for the h-parallel transport can also be written as
\begin{equation}
	(\nabla_{\dot{\ga}}W)\ga^t\ga\ga^t+\ga\ga^t(\nabla_{\dot{\ga}}W)\ga^t=(\dot{\gamma}W^t-W\dot{\gamma}^t)\ga\ga^t.
\end{equation}
Hence, a vector field along a curve in $\pi(S)$ is parallel if and only if it has a horizontal lift satisfying the above equation.\\

\begin{remark}
	We mention two cases such that \eqref{eqptmain} and \eqref{eqptmain1} apply. First, $W$ coincides with the spherical parallel transport of $u$ if and only if $uv^t=vu^t$ or, equivalently, $C=0$. Secondly, for planar shapes. To see this, let $\chi_i$ and $\eta_i$ denote the rows of a shape $\chi$ and $\eta$ a horizontal vector
	at $\chi$. Fix $\mu\in\R$ and let $C \defeq \mu \begin{pmatrix}0& 1\\-1& 0\end{pmatrix}$.
	Then $C\eta \chi^t=\mu\begin{pmatrix}\eta_2\chi_1^t& \eta_2\chi_2^t\\ -\eta_1\chi_1^t& -\eta_1\chi_2^t\end{pmatrix}$ is symmetric since $\<\eta,\chi\>=0$.
	Hence, $C\dot{\ga}$ is horizontal for arbitrary $C\in \Skew_2$.
\end{remark}

\subsection{Jacobi Fields}
\revised{Next, we derive the differential equation for Jacobi fields and
provide a constructive approach to its solution utilizing parallel
transport.}

\revised{We recall that a smooth horizontal curve $\ga$ in $S$ is a
geodesic if and only if $\pi\circ \ga$ is a geodesic in $\pi(S)$.}
Hence, for a horizontal geodesic $\ga$, any geodesic variation of
$\pi\circ \ga$ in the latter space reads $\pi\circ \G$ with $\G$ a
variation of $\ga$ through horizontal geodesic.
Thus the variation field $\frac{\rd}{\rd s}(\pi\circ \G(s, \quark
))|_{s=0}=\rd \pi ( \frac{\rd}{\rd s}\G(s, \quark )|_{s=0})$ is a Jacobi
field of the shape space.
Recall that a vector field $J$ along $\ga$ is called \emph{normal} if and
only if $\<J,\dot{\ga}\>=0$ and the tangential component of any Jacobi
field is
just given by $(a+bt)\dot{\ga}(t)$ with $a,b\in\R$, which is obviously
horizontal.
Thus the challenge is to find those normal vector fields that project to a
Jacobi field in the shape space.
\revised{
\begin{theorem}
	Let $J$ be a normal vector field along $\ga$ and denote
	\[
        K = \left(\frac{\rD J^v}{\rd t}\right)^v+2\left( \frac{\rD
J^h}{\rd t} \right)^v.
    \]
	\begin{enumerate}
		\renewcommand{\labelenumi}{(\alph{enumi})}
		\item $\rd \pi (J)$ is a Jacobi field if and only if
		\begin{align}
			\label{eqjf1}\left( \frac{\rD^2J}{\rd t^2} + J\right)^h &=
2\left(\frac{\rD K}{\rd t} \right)^h\\
			\label{eqjf12}
			\left( \frac{\rD^2J}{\rd t^2} + J\right)^v &= \left(\frac{\rD K}{\rd t}
\right)^v
		\end{align}
		\item A normal Jacobi field $J^S$ of the pre-shape sphere projects to a
Jacobi field if and only if $\left(\frac{\rD K}{\rd t} \right)^h=0$.
	\end{enumerate}
\end{theorem}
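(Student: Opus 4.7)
The strategy is to lift the Jacobi equation of $\pi(S)$ back to the pre-shape sphere via the Riemannian submersion $\pi$ and compare it with the sphere Jacobi equation, which takes a particularly simple form thanks to constant sectional curvature. First I would verify that for any vector field $W$ normal to a unit-speed horizontal geodesic $\ga$ (so $\<W,\ga\>=\<W,\dot\ga\>=0$), the identities $\ddot\ga=-\ga$, $R(W,\dot\ga)\dot\ga=W$ (curvature of the unit pre-shape sphere), and $\nabla_{\dot\ga} W=\dot W$ (the radial correction $-\<\dot W,\ga\>\ga$ vanishes for such $W$) combine to give $\nabla^2_{\dot\ga}W+R(W,\dot\ga)\dot\ga=\ddot W+W$ in ambient coordinates. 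Consequently, $\ddot J+J$ measures precisely the failure of a general normal $J$ to be a Jacobi field of the pre-shape sphere, so the left-hand sides of \eqref{eqjf1}--\eqref{eqjf12} are the natural horizontal and vertical components of the sphere Jacobi operator applied to $J$.

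For (a), I would realize shape-space Jacobi fields as $\rd\pi(V)$ for normal sphere-Jacobi fields $V$ arising from horizontal geodesic variations $\Gamma(s,t)$ of $\ga$; since $\rd\pi$ kills vertical components, $\rd\pi(J)=\rd\pi(J^h)$, so $\rd\pi(J)$ is Jacobi iff some vertical field $U$ can be chosen with $V \defeq J^h+U$ satisfying both $\ddot V+V=0$ and the horizontality of the underlying variation. Differentiating the Sylvester equation \eqref{eqsylvester} for vertical projection with respect to $s$ at $s=0$, and using the $\SO_m$-equivariance of $\ver$ from \Cref{lemprj}(b) to track how the vertical subspace varies with the basepoint $\Gamma(s,t)=\ga(t)+sV(t)+O(s^2)$, yields a first-order ODE for $U$ whose right-hand side is built precisely from the A-tensor-type quantities $(\dot{J^h})^v$ and $(\dot{J^v})^v$, i.e., from the field $K$. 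Substituting $V=J^h+U$ into $\ddot V+V=0$ and splitting into horizontal and vertical components along $\ga$ then produces the pair \eqref{eqjf1}--\eqref{eqjf12}; the factor-2 discrepancy traces back to the A-tensor contributing twice to the horizontal equation (once via the horizontal lift of $\tilde\nabla_{\bar{\dot\ga}}$ and once via O'Neill's curvature correction of type $+3\|A_{\dot\ga} J^h\|^2$) but only once to the vertical one.

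For (b), specializing (a) to a normal sphere Jacobi field $J^S$ makes the left-hand sides of \eqref{eqjf1}--\eqref{eqjf12} vanish, reducing them to $(\dot K)^h=0$ and $(\dot K)^v=0$ respectively; I would then verify that the vertical condition is automatic for sphere Jacobi fields by plugging $\ddot{J^S}=-J^S$ into the definition of $K$ and collapsing the resulting expression, leaving $(\dot K)^h=0$ as the only non-redundant criterion. The main obstacle I anticipate is the computation in (a): differentiating the point-dependent Sylvester equation cleanly requires carefully disentangling the dependence of $\ver$ on the basepoint from its dependence on the argument, which is where the equivariance in \Cref{lemprj}(b) is essential; reconciling the two distinct avenues by which the A-tensor enters so that the correction collapses to exactly $K=(\dot{J^v})^v+2(\dot{J^h})^v$ is where I would expect the bookkeeping to be hardest. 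The automatic vanishing of $(\dot K)^v$ in (b) is the secondary subtlety, handled by a direct substitution using the sphere Jacobi equation $\ddot{J^S}+J^S=0$.
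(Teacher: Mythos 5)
Your overall reading of the statement is sound --- the left-hand sides of \eqref{eqjf1}--\eqref{eqjf12} are the horizontal and vertical parts of the spherical Jacobi operator applied to $J$, and part (b) is the specialization to $\ddot J + J = 0$ --- but your route to part (a) is genuinely different from the paper's and, as written, has a gap exactly at the crux. The paper does not differentiate the Sylvester equation at all: it quotes O'Neill's theorem on Jacobi fields and Riemannian submersions (Theorem 2 of the cited 1967 paper), which says that $\rd\pi(J)$ is a Jacobi field iff $(\ddot J - \rR(J,X,X))^h = 2\cA_XK$ and $(\ddot J - \rR(J,X,X))^v = (\dot K)^v + \cT_KX$ with $K = (\dot{J^v})^v - \cT_{J^v}X + 2\cA_XJ^h$; it then removes the $\cT$-terms by asserting that the fibers are totally geodesic, identifies $\cA_XK$ with $(\dot K)^h$ because $K$ is vertical, and uses $\rR(J,X,X)=-J$. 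Your plan replaces this citation by a from-scratch derivation: lift a geodesic variation of the base to a variation of $\ga$ through horizontal geodesics, write its variation field as $J^h+U$ with $U$ vertical, and extract an ODE for $U$ by differentiating \eqref{eqsylvester} in the variation parameter. That is a legitimate strategy --- it amounts to reproving O'Neill's theorem in this special case --- but you never carry out the computation that is the entire content of the statement: nothing in your sketch shows that the bookkeeping lands on precisely $K = (\dot{J^v})^v + 2(\dot{J^h})^v$, with the factor $2$ on the horizontal equation and $1$ on the vertical one. The justification you do offer for the factor of $2$ abandons the Sylvester route and gestures at O'Neill's sectional-curvature correction $+3\|\cA_XY\|^2$, which is a statement about curvature, not about the Jacobi-field correction terms, and does not by itself produce the claimed right-hand sides. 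You also assert without proof the reduction ``$\rd\pi(J)$ is Jacobi iff some vertical $U$ makes $J^h+U$ a sphere Jacobi field coming from a horizontal-geodesic variation''; the forward implication (every base Jacobi field is realized by a variation whose lift differs from $J$ only vertically along $\ga$) itself needs an argument.

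On part (b) your logic matches the paper's, and you are in one respect more careful: once $\ddot{J^S}+J^S=0$, the two conditions of part (a) become $(\dot K)^h=0$ \emph{and} $(\dot K)^v=0$, so the stated criterion is complete only if the vertical condition is automatic for spherical Jacobi fields. You flag this and propose to check it by substitution, whereas the paper's one-line proof of (b) passes over it silently. That is a genuine (small) improvement, but it does not repair the main deficiency: the derivation of \eqref{eqjf1}--\eqref{eqjf12} in part (a) is announced rather than performed, and the coefficient bookkeeping you yourself identify as ``where the difficulty lies'' is exactly what a proof must supply --- either by executing your variational computation in full or by invoking O'Neill's submersion Jacobi-field theorem as the paper does.
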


\begin{proof}
	(a) Obviously, solutions of the equations are, due to
$\SO_m$-equivariance of horizontal and vertical projection, invariant
under $\SO_m$ action.
    Let $Y$ and $Z$ be vector fields on $\smk$. Following \citet{Oneill66},
the $\cA$ and $\mathcal{T}$-tensor fields are defined as
	\begin{align*}
	 \cT_YZ &=(\nabla_{Y^v}Z^v)^h+(\nabla_{Y^v}Z^h)^v,\\
	 \cA_YZ &=(\nabla_{Y^h}Y^v)^h+(\nabla_{Y^h}Z^h)^v.
	\end{align*}
	Due to \cite[Theorem 2]{Oneill67}, $\rd \pi(J)$ is a Jacobi field if and
only if
	\begin{align*}
	 \left(\frac{\rD^2J}{\rd t^2}  -\rR (J,X,X) \right)^h &=2\cA_X K,\\
	 \left(\frac{\rD^2J}{\rd t^2}-\rR (J,X,X) \right)^v &=\left(\frac{\rD
K}{\rd t}\right)^v+\cT_K X,
	\end{align*}
	where $X=\dot{\ga}$, $\frac{\rD}{dt}$ stands for $\nabla_X$ and the
vector field $K$ is given by
	\[
        K(J)=\left(\frac{\rD J^v}{\rd t}\right)^v-\cT_{J^v}X+2\cA_X J^h.
	\]
	Note that as $J$ is normal, its covariant and Euclidean derivative
coincide. In our setting, the fibers are totally geodesic, hence
$\cT\equiv 0$. Therefore
    $K = \left(\frac{\rD J^v}{\rd t}\right)^v+2\left( \frac{\rD J^h}{\rd
t} \right)^v $. Moreover, we may suppose $\|X\|=1$. Thus
$\rR(J,X,X)=-J$ and we arrive at \eqref{eqjf1} and \eqref{eqjf12}.

	(b) It follows immediately from $\frac{\rD^2J^S}{\rd t^2}+J^S=0$.
\end{proof}

In the following, we give a geometric construction for normal Jacobi
fields which will be employed for geodesic regression.  
\revisedx{For $\xi\in T_xS$, we set $ver_{x,v}(\xi):=Ax$, where $A$ denotes the solution (cf.
\Cref{lemprj}) of the Sylvester equation \[Axx^t+xx^tA=v\xi^t-\xi v^t.\]}
In the sequel $x=\ga (0)$, $v=\dot{\ga}(0)$, $\xi_1,\xi_2\in Hor_x$, $\langle \xi_1,v\rangle =\langle \xi_2,v\rangle =0$ and $i=1,2$.
\begin{theorem}
	\label{thjf2}
	Suppose that $\ga$ has unit speed $v$. Let $J$ be the solution of the
	differential equations \eqref{eqjf1}, \eqref{eqjf12} with $J^v(0)=Cx$, 
	$J^h(0)=\xi_1$ and $\frac{\revisedx{\rD} J^h }{d t}(0)=\xi_2$. Then $J^v=C\ga$ and the
	following hold.
	\begin{enumerate}
		\renewcommand{\labelenumi}{(\alph{enumi})}
		\item Let $Z_i$ and $Y_1$ denote the parallel extensions of $\xi_i$ resp. $Ax=ver_{x,v}(\xi_1)$ along $\ga$. Then \[J^h(t)=(\cos (t)Z_1(t)-\sin (t)(Y_1(t)+Z_2(t)))^h.\]
		\item Suppose that $m=2$. Let $w_i$ denote the orthogonal projection of $\xi_i$ tangent to $\Skew_2\cdot v$ and $u_i$ its orthogonal complement. Then   
		\begin{align*}
			J^h=\cos (t) U_1(t)+\cos (2t) W_1(t)+\sin(t)U_2(t)+\frac{1}{2}\sin(2t)W_2(t),
		\end{align*}
		where $W_i$ and $U_i$ are parallel extensions of $w_i$ and $u_i$.
	\end{enumerate}
\end{theorem}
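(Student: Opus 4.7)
The plan is to construct $J^v$ and $J^h$ explicitly on the pre-shape sphere, verify that the pair satisfies the coupled system \eqref{eqjf1}--\eqref{eqjf12} with the prescribed initial data, and invoke uniqueness of the Jacobi system.

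\textbf{Vertical part.} Set $J^v(t) \defeq C\gamma(t)$. Since $\gamma$ has unit speed on the pre-shape sphere, $\ddot\gamma + \gamma = 0$, whence $\tfrac{\rD^2 J^v}{\rd t^2} + J^v = C(\ddot\gamma + \gamma) = 0$. Because $C \in \Skew_m$, the image $J^v(t) = C\gamma(t)$ belongs to $\Skew_m \cdot \gamma(t) = \Ver_{\gamma(t)}$ for every $t$, and $J^v(0) = Cx$ as required. Equation \eqref{eqjf12} thereby reduces to $\bigl(\tfrac{\rD K}{\rd t}\bigr)^v = 0$, to be verified in tandem with the next step.

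\textbf{Part (a).} Introduce on the pre-shape sphere
\[
    V(t) \defeq \cos(t)\,Z_1(t) - \sin(t)\bigl(Y_1(t) + Z_2(t)\bigr).
\]
Since $Z_1, Z_2, Y_1$ are sphere-parallel along $\gamma$, their covariant derivatives vanish, and $V$ is a trigonometric combination of parallel fields; hence $\tfrac{\rD^2V}{\rd t^2} + V = 0$, i.e., $V$ is a Jacobi field on the pre-shape sphere. Normality follows at $t=0$ from $\langle\xi_1,v\rangle = \langle\xi_2,v\rangle = 0$ (hypothesis) and $\langle Ax,v\rangle = 0$ (since $Ax \in \Ver_x$ and $v \in \Hor_x$), and propagates along $\gamma$. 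By part (b) of the preceding theorem, the horizontal projection $V^h$ is the horizontal lift of a Jacobi field on the shape space whenever $\bigl(\tfrac{\rD K}{\rd t}\bigr)^h = 0$. Writing $V^v = \hat A\gamma$ for the time-dependent Sylvester solution and differentiating
\[
    \hat A\,\gamma \gamma^t + \gamma\gamma^t\,\hat A = V\gamma^t - \gamma V^t
\]
along $\gamma$, then using $\ddot\gamma + \gamma = 0$ together with skew-symmetric matrix algebra, $\tfrac{\rD K}{\rd t}$ collapses to a vector whose horizontal \emph{and} vertical parts vanish identically; this simultaneously closes the vertical-equation check from the first step. To match initial data, note that $\xi_1 \in \Hor_x$ gives $\xi_1 x^t - x\xi_1^t = 0$, hence $\hat A(0) = 0$ and $V^h(0) = \xi_1$. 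Differentiating the Sylvester equation once more at $t = 0$, using $\hat A(0) = 0$, the defining relation $Axx^t + xx^t A = v\xi_1^t - \xi_1 v^t$, and horizontality of $\xi_2$ (which kills $\xi_2 x^t - x\xi_2^t$), one extracts $\xi_2$ as the horizontal part of $\tfrac{\rD V^h}{\rd t}(0)$.

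\textbf{Part (b).} When $m=2$, $\Skew_2 = \R \cdot J_0$ with $J_0 = \begin{pmatrix}0 & 1\\ -1 & 0\end{pmatrix}$. By the remark after \Cref{proppt}, $C\dot\gamma$ is horizontal for every $C \in \Skew_2$, so $\Skew_2 \cdot v \subset \Hor_x$. For the splitting $\xi_i = u_i + w_i$, a short calculation yields $\langle u_i, J_0 v\rangle = 0 \Longleftrightarrow u_i v^t$ symmetric, so the Sylvester right-hand side corresponding to $u_i$ vanishes and the $u_i$-contribution to (a) has $Y_1 = 0$, producing the standard sphere frequencies $\cos(t)\,U_1, \sin(t)\,U_2$. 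For the $w_i$-components, write $w_i = B_i v$ with $B_i \in \Skew_2$; using the 2D identity $J_0 M + M J_0 = \tr(M)\,J_0$ for symmetric $M$, the Sylvester equation for $A$ solves explicitly, giving an easily-evaluated parallel extension $Y_1$. Substituting into (a) and invoking the half-angle identities $2\sin t\cos t = \sin 2t$, $\cos^2 t - \sin^2 t = \cos 2t$ yields the doubled-frequency contribution $\cos(2t)\,W_1 + \tfrac12\sin(2t)\,W_2$.

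\textbf{Main obstacle.} The delicate step is the verification of $\bigl(\tfrac{\rD K}{\rd t}\bigr)^h = 0$ and $\bigl(\tfrac{\rD K}{\rd t}\bigr)^v = 0$ for the candidate $J$. This couples the time-derivative of the vertical projection, governed by a Sylvester equation along $\gamma$, with the geodesic identity $\ddot\gamma + \gamma = 0$; the argument hinges on careful skew-symmetric algebra built on the defining relation for $A$ and the parallelism of $Z_i, Y_1$ in the pre-shape sphere.
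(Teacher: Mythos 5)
Your overall strategy --- write down the candidate field, check that it solves the system \eqref{eqjf1}--\eqref{eqjf12} (equivalently, that the normal sphere Jacobi field $V(t)=\cos(t)Z_1(t)-\sin(t)(Y_1(t)+Z_2(t))$ satisfies the horizontality criterion from part (b) of the preceding theorem), and invoke uniqueness --- is legitimate in principle and genuinely different from the paper's, but the step you yourself label the ``main obstacle'' is exactly the step you do not carry out. The assertion that $\tfrac{\rD K}{\rd t}$ ``collapses to a vector whose horizontal and vertical parts vanish identically'' after ``skew-symmetric matrix algebra'' is the entire mathematical content of part (a): it is precisely this computation that forces the correction term $\sin(t)Y_1(t)$ with $Y_1(0)=Ax=\ver_{x,v}(\xi_1)$ to appear, since the bare parallel field $\cos(t)Z_1(t)$ does not by itself project to a Jacobi field of the shape space. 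Without actually differentiating the time-dependent Sylvester solution $\hat A(t)$ along $\ga$ and exhibiting the cancellation, the argument assumes the conclusion at the decisive point. The gap propagates into part (b), which you obtain by substitution into (a); there you also omit the key identity $A=-B_1$ (a consequence of $Qxx^t+xx^tQ=Q=Qvv^t+vv^tQ$ for $m=2$), which is what ties $Y_1$ to $W_1$ and produces the doubled frequencies.

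For contrast, the paper avoids the ODE verification entirely by a constructive argument: it exhibits explicit variations of $\ga$ through \emph{horizontal} geodesics, namely $\exp(sC)\ga(t)$ for the vertical part, $\G_2(s,t)=\cos(t)x+\sin(t)(\cos(s)v+\sin(s)\xi_2)$ for the $\xi_2$-part, and, crucially, $\G_1(s,t)=\cos(t)\alpha(s)+\sin(t)V(s)$ with $\alpha(s)=\cos(s)x+\sin(s)\xi_1$ and $V$ the h-parallel transport of $v$ along $\alpha$; equation \eqref{eqpt2} of \Cref{proppt} gives $V'(0)=Ax$, so the variation field of $\G_1$ is $\cos(t)\xi_1+\sin(t)Ax$ and already carries the coupling between $\xi_1$ and $\ver_{x,v}(\xi_1)$ that your route must extract from the $K$-computation. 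Since each $\G_i$ is a variation through horizontal geodesics, $\rd\pi$ of its variation field is automatically a Jacobi field of the shape space, and linearity plus matching of initial data finishes the proof. If you wish to keep your verification route, you must actually compute $\bigl(\tfrac{\rD K}{\rd t}\bigr)^h$ and $\bigl(\tfrac{\rD K}{\rd t}\bigr)^v$ for your candidate; as it stands the proof has a hole exactly where the theorem is nontrivial.
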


\begin{proof}
We may write $\ga (t)=\cos(t)x+\sin(t)v$ with $\|v\|=1$, $\<x,v\>=0$ and
$vx^t=xv^t$. The variation
\[
 \G(s,t)=\exp(sC)\ga (t)
\]
is a variation of $\ga$ through horizontal geodesics since $\dot{\G}\G^t$ is symmetric, and defines a vertical Jacobi field $J^v=\frac{d\G}{ds}(0,.)=C\ga$ with $J(0)=Cx$.

(a) As any Jacobi field is a linear combination of parallel vector fields, and those vector fields conserve \revisedx{orthogo}nality
and length, we may assume $\revisedx{\|}\xi_i\|=1$. Furthermore, due to $\langle \xi_i,v\rangle=0$ and $\langle Ax,v\rangle =0$, $Z_i(t)=\xi$ and $Y(t)=Ax$. Now, let $V$ denote the h-parallel transport of $v$ along the geodesic $\alpha$ given by $\alpha(s)=\cos (s) x+\sin (s) \xi_1$. Due to \eqref{eqpt2}, we have $V^\prime (0)=Ax$. \revisedx{Now, consider the variations of $\ga$ given by}
\[
 \G_1(s,t)=\cos (t)\alpha (s) +\sin (t) V(s)
\]
and
\[
 \G_2(s,t)=\cos (t)x +\sin (t)(\cos (s)v+\sin (s)\xi_2).
\]
A straightforward computation shows that $\dot{\G_i}\G_i$ is symmetric, i.e., $\G_i$ is a variation of $\ga$ through horizontal
geodesics. Hence $d\pi J_i$ is a Jacobi field, where $J_i=\frac{d \G_i }{d s}(0,.)$. Therefore, $d\pi (J_1+J_2)$ is a Jacobi field. Moreover, $\frac{d \G_2 }{d s}(0,0)=0$ and $\frac{\rD  }{d t}\frac{d \G_2 }{d s}(0,0)=\frac{\rD }{d s}\frac{d \G_2 }{d t}(0,0)=\xi_2$. 
Hence the solution with $J^h(0)=0$ and $\frac{\revisedx{\rD} J^h }{d t}(0)=\xi_2$ is given by $J_2^h$, where $J_2(t)=\sin (t)Z_2(t)=\frac{d \G_2 }{d s}(0,t)$. It follows that the solution with $J^h(0)=0$ and $\frac{\rD J^h }{d t}(0)=Ax$ is given by $(t\mapsto \sin (t)Y_1(t))^h$. 
Furthermore, $\frac{d \G_1 }{d s}(0,0)=\xi_1$ and $\frac{\rD  }{d t}\frac{d \G_2 }{d s}(0,0)=\frac{\rD }{d s}\frac{d \G_2 }{d t}(0,0)=Ax$. The fact that the space of horizontal vector fields along $\ga$ is linear, completes the proof.

(b) Let $Q=\begin{pmatrix}0& 1\\ -1 & 0\end{pmatrix}$. Then $\xi_i$ enjoys the orthogonal decomposition $\xi_i=u_i+w_i$, where $w_i=B_iv$, $u_iv^t=vu_i^t$ and $B_ivv^t+vv^tB_i=\xi_i v^t-v\xi_i^t$. Moreover, $A=\mu Q$ and $B_i=\lambda_i Q$ for some $\mu, \lambda_i\in \R$.
A straightforward computation shows $Qxx^t+xx^tQ=Q=Qvv^t+vv^tQ$ (note that $\|x\|=\|v\|=1$). Hence $A=-B_1$. Now, the vector $w_i$ is horizontal (cf. Remark 1) and normal. 
Hence the vector $u_i=\xi_i-w_i$ is also horizontal and normal. Therefore its parallel extension is given by $U_i(t)=u_i$. Using $u_iv^t=vu_i^t$, we arrive at $U_i \ga^t=\ga U_i^t$, i.e. $U_i^h=U_i$. Moreover, utilizing the fact that $Q^2$ is minus indentity, we have $W_i^h=W_i-W_i^v=B_iv-\langle B_iv,Q\ga\rangle Q\ga =\lambda_i(Qv-\langle Qv,Q\ga\rangle Q\ga)=\cos (t) B_i\dot{\ga}=\cos (t)W_i$. Similarily, for the constant vector field $B_1x$, we have $(B_1x)^h=-\sin (t)W_1$. Implying in the expression of $J^h$ from part a), we arrive at the desired formula.
\end{proof}

}

We recall that for $m=2$, the shape space is isometric to the complex projective space endowed with 
its standard (Fubini--Study) metric. The given formula for $J^h$ in this case is well-known (cf.\ \citet{F2013} and \citet{J2017}).
\section{Geodesic Regression}
\label{sec:regression}

In the following, we employ the results of the previous section to derive an efficient and robust approach for finding the relation between an independent scalar variable, i.e.\ time, and a dependent shape-valued random variable.

Regression analysis is a fundamental tool for the spatiotemporal modeling of longitudinal observations.
Given scalars $t_1 < t_2 < \cdots < t_N$ and distinct pre-shapes $q_1,\cdots,q_N$, the goal of geodesic regression is to find a geodesic curve in shape space that best fits the data in a least-squares sense.
In particular for a horizontal geodesic $\ga$ from $x$ to $y$ with $v=\dot{\ga}(0)$, we define the misfit between the data and the geodesic as a sum of squared distances with respect to $d_\Sigma$, i.e.
\begin{equation}
	\label{eq:regression_objective}
	F(\ga) \defeq \sum_{i=1}^Nd_\Sigma^2(q_i,\ga(t_i)).
\end{equation}
We can assume that $t_1=0$ and $t_N=1$.
While the authors of \cite{F2013} and \cite{SM2006} identify geodesics by their initial point and velocity --- and hence they consider $F(x,v)$ --- we use for the identification their endpoints, i.e., we consider
\[
	F(x,y) = \sum_{i=1}^Nd_\Sigma^2(q_i,\ga(t_i)) = \sum_{i=1}^Nd_\Sigma^2(q_i,\Phi(t_i,x,y)).
\]
The reason is that geodesic computations in terms of the function $\Phi$ defined in equation \eqref{eqphi}\revised{, the so-called slerp (spherical linear interpolation),} are more efficient. 
Model estimation is then formulated as the least-squares problem
\[
	(x^\ast,y^\ast) = \argmin_{(x,y)} F(x,y),\: x\w y.
\]

In the absence of an analytic solution, the regression problem has to be solved numerically.
To this end, we employ a Riemannian trust-regions solver~\citep{manopt} with a Hessian approximation based on finite differences and use
$(q_1,\omega(q_1,q_N))$ as initial guess.
\revised{Having in mind that (cf.\ \citet{P2006} and \citet{J2017})
\begin{equation}
\label{eq:gradient}
   \nabla \rho_y(x) = -2\Log_x y=-2(\log_xy)^h
\end{equation}
where $\rho_y(x) \defeq d_\Sigma^2(x,y)$, the gradient of the cost function $F$ can be computed using Jacobi fields,
since they express the derivatives of the exponential map and therefore those of $\Phi$. 
Now, for fixed $q$ and $t$, let $\nabla_xf$ denote the gradient of $f$ with respect to $x$ where $f(x,y):=\rho_q\circ\Phi(t,x,y)$. 
Then for any $u\in\Hor_x$, $d_x\Exp_xtv\cdot u=J(t)$ where $d\pi J$ is the horizontal Jacobi field along $\ga$ with $J(0)=u$ and $J(1)=0$. 

\revisedx{In the following, $\top$ and $\perp$ denote tangent resp. orthogonal components of vectors. Now, let $\alpha$ denote the unit speed horizontal geodesic from $y$ to $x$, i.e., $\alpha (s)=\cos (s) y+\sin (s)v$ with $v=\log_yx$, $s\in [0,\varphi]$ and $\varphi=\|v\|$. Denoting the horizontal component of the \revisedx{parallel} extension of $u$ along $\ga$ by $U$, and $\tilde{J}(s)=\frac{\sin s}{\sin \varphi} (U^\perp)^h$, due to \revisedx{T}heorem \ref{thjf2}, $d\pi \tilde{J}$ is a Jacobi field with $\tilde{J}(0)=0$ and $\frac{1}{\sin \varphi}\dot{\tilde{J}}(0)=(U^\perp)^h$. Reparametrization only changes the tangent component of the Jacobi field. Moreover, horizontal projection does not depend on the parametrization.} Due to the fact that $t\mapsto \Phi(t,x,y)$ parametrizes the reverse geodesic by arc length ($\|\dot{\Phi}(0,x,y)\|=\varphi$), we arrive at $J(t)=\tilde{J}((1-t)\varphi)$. Hence
\[
 J(t)=\frac{\sin((1-t)\varphi)}{\sin\varphi}(U^\perp)^h+(1-t)U^\top.
\]
Let $P_x$ denote the h-parallel transport to $x$ along $\ga$. 
In view of \eqref{eq:gradient}, $-\frac{1}{2}P_{\ga(t)}\nabla_xf(x,y)$ is the adjoint of the mapping $u\mapsto J(t)$. 
As the latter is self-adjoint it follows that
\[
 \nabla_xf(x,y)=-2P_x \left( \frac{\sin((1-t)\varphi)}{\sin\varphi}(W^\perp)^h+(1-t)W^\top \right) ,
\]
where $W=\Log_{\ga (t)}q$.
To get the gradient of $f$ with respect to $y$, we simply replace $1-t$ by $t$ (another advantage of employing the parametrization \eqref{eqphi}) and arrive at
\begin{align*}
 \nabla_xF(x,y)&=-2P_x\sum_{i=1}^N \left( \frac{\sin((1-t_i)\varphi)}{\sin\varphi}(W_i^\perp)^h+(1-t_i)W_i^\top \right)\\
 \nabla_yF(x,y)&=-2P_y\sum_{i=1}^N \left( \frac{\sin(t_i\varphi)}{\sin\varphi}(W_i^\perp)^h+t_iW_i^\top \right) ,
\end{align*}
where $W_i=\Log_{\ga (t)}q_i$. 

Now, our procedure for geodesic regression can be summarized as presented in Algorithm \ref{alg:reg}.

\begin{algorithm}
\caption{Geodesic regression in shape space}
\label{alg:reg}
\begin{algorithmic}
\REQUIRE Pre-shapes $q_1,\cdots,q_N$ and time instances $t_1\cdots,t_N$
\ENSURE Minimizer $(x^\ast,y^\ast)$
\STATE Initialize: $(x_0,y_0) \leftarrow (q_1,\omega(q_1,q_N))$
\STATE Define cost function $F$ and its gradient $gradF$
\STATE Create the problem structure $P$:
\STATE $P.manifold \leftarrow Sphere(m,k)$
\STATE $P.cost \leftarrow @(x,y) F(x,\omega (x,y))$
\STATE $P.grad \leftarrow @(x,y) grad F(x,\omega (x,y))$
\STATE Minimize: $(x^\ast,y^\ast,cost) \leftarrow Solver(P,x_0,y_0)$
\end{algorithmic}
\end{algorithm}
}
\section{Application to Epidemiological Data}
\label{sec:epidemiology}

In this section, we analyze the morphological variability in longitudinal data of human distal femora in order to quantify shape changes that are associated with femoral osteoarthritis.

\subsection{Data Description}

We apply the derived scheme to the analysis of group differences in longitudinal femur shapes of subjects with incident and developing osteoarthritis (OA) versus normal controls.
\revised{An overview of OA-related dysmorphisms is shown in \Cref{fig:healthyVsOA}.}
The dataset is derived from the Osteoarthritis Initiative (OAI), which is a longitudinal study of knee osteoarthritis maintaining (among others) clinical evaluation data and radiological images from 4,796 men and women of age 45--79.
The data are available for public access at \url{http://www.oai.ucsf.edu/}.
\begin{figure}[ht]
	\centering
	\includegraphics[width=0.8\textwidth]{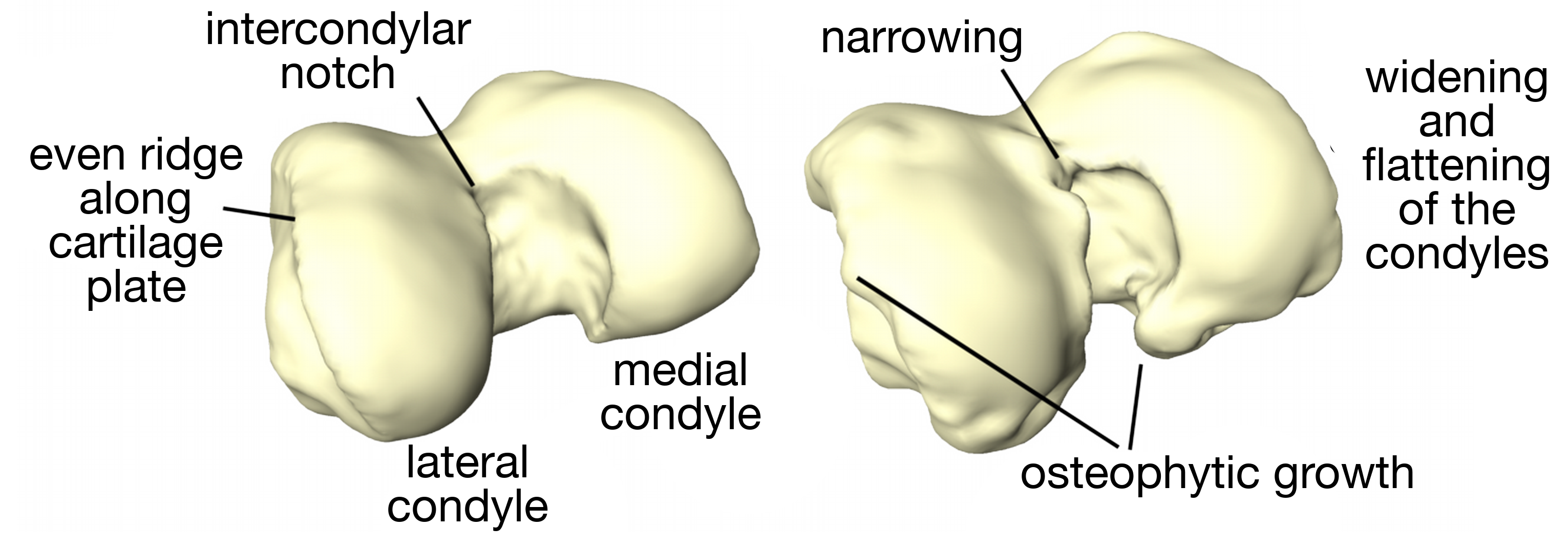}
	\caption{Healthy (left) and osteoarthritic (right) distal femur with delineated pathological changes in shape.}
	\label{fig:healthyVsOA}
\end{figure}
From the OAI database, we determined three groups of shapes trajectories: HH (healthy, i.e.\ no OA), HD (healthy to diseased, i.e.\ onset and progression to severe OA), and DD (diseased, i.e.\ OA at baseline) according to the Kellgren--Lawrence 
score~\citep{kellgren1957KLscore} of grade 0 for all visits, an increase of at least 3 grades over the course of the study, and grade 3 or 4 for all visits, respectively.
We extracted surfaces of the distal femora from the respective 3D weDESS MR images (0.37$\times$0.37 mm matrix, 0.7 mm slice thickness) using a state-of-the-art automatic segmentation approach \citep{Ambellan2018segmentation}.
For each group, we collected 22 trajectories (all available data for group DD minus a record that exhibited inconsistencies, and the same number for groups HD and HH, randomly selected), each of which comprises shapes of all acquired MR images, 
i.e.\ at baseline, the 12-, 24-, 36-, 48- and 72-month visits.
In a supervised post-process, the quality of segmentations as well as the correspondence of the resulting meshes (8,988 vertices) were ensured.

\subsection{Geodesic Modeling of Femoral Trajectories}

We apply the geodesic regression approach detailed in \Cref{sec:regression} to the femoral shape trajectories described above and represented in Kendall's shape space.
Due to the expressions derived for the parallel transport and Jacobi fields, we can fully leverage the geometry using 
Riemannian optimization procedures \revised{(cf. \citet{AMS2007}).
In particular, for the intrinsic treatment of the optimization problem underlying the geodesic regression we use the open-source Matlab toolbox manopt \citep{manopt}.
In our experiments}, we observed a superlinear convergence of the intrinsic trust-region solver for most of the shape trajectories.
Solving the high-dimensional (54k degrees of freedom) regression problem on a laptop computer with Intel Core i7-7500U ($2\times{}2.70$GHz) CPU took about 0.3s on average.
In contrast, the generic Matlab \revised{routine for nonlinear regression (viz. \emph{fitnlm})} required about 25s to determine a solution, thus being two orders of magnitude slower.

The resulting estimated geodesics along with the original trajectories are visualized in \Cref{fig:regtrjs}.
\revised{
The geodesic representation provides a less cluttered visualization of the trajectory population
making it easier to identify trends within as well as across groups.
For 2-dimensional visualization we perform dimension reduction for the trajectories $X^1,\cdots,X^k$ with $X^j=(x_1^j,\cdots,x_n^j)$, i.e. we apply tangent PCA to $(x_i^j)_{i=1,\cdots,n}^{j=1,\cdots,k}$ at the mean of all baseline shapes in HH. In the remainder, the latter is referred to as reference shape Ref.
}

\begin{figure*}[h]
 \begin{center}
    \includegraphics[width=.9\textwidth]{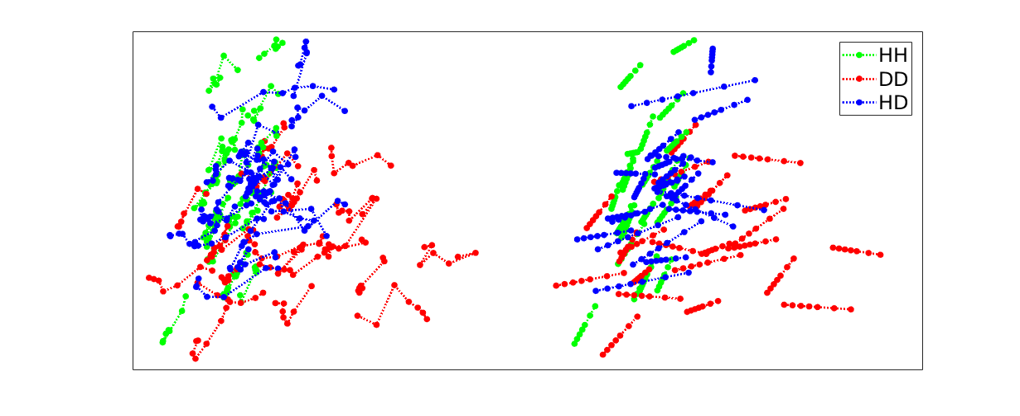}
	\caption{\revised{Principal components} for femoral shape trajectories of subjects with
	no (HH), progressing (HD), and severe (DD) osteoarthritis (left) and their qualitatively estimated shape trajectories via geodesic
	regression (right).
	Note that points on the left show the observed shapes, while those on the right show the corresponding points on the fitted geodesic.
	}
\label{fig:regtrjs}
 \end{center}
\end{figure*}

Next we would like to answer the question of how well the observed data is replicated by the estimated geodesic trends.
A common approach to test this is to compute the \emph{coefficient of determination}, denoted as $R^2$, that is the proportion of the total variance in the data explained by the model.
Following \citet{F2013}, a generalization to manifolds is defined as
\[
	R^2 = 1 - \frac{\text{unexplained variance}}{\text{total variance}} = 1 - \frac{\min_{\ga} F(\ga)}{\min_x G(x)},
\]
with $F(\ga)$ and $G(x)$ as defined in equations \eqref{eq:regression_objective} and \eqref{eq:mean_objective}, respectively.
As the unexplained variance cannot exceed the total variance (since the Fr{\'e}chet mean lies in the search space of the regression problem) and both variances are nonnegative, $R^2$ must 
lie in the interval $\left[0,1\right]$ (with larger values indicating a higher proportion of the variance being explained by the model).

The coefficients of determination were computed for all estimated trends amounting to
\revised{group-wise medians (95\% confidence intervals) of 0.40 (0.33--0.46), 0.55 (0.48--0.63), and 0.51 (0.40--0.72)} for group HH, DD, and HD, respectively.
While for all groups the geodesic model is able to describe a relatively large portion of the shape variability, there is a \emph{clear difference} between the control group HH and the groups DD and HD associated to osteoarthritis.
In particular, pairwise Mann--Whitney U tests confirm that the differences are highly unlikely due to random chance (with $p$-values of \revisedx{$<10^{-3}$}, and $0.005$ for HH vs.\ DD, and HH vs.\ HD, respectively).
These findings indicate that the OA-related shape \revised{variability is better caputured by a single variable (time) than the physiological trends in HH}.
Based on the coefficient of determination we also test for the significance of the estimated trends employing permutation tests as suggested in \citet{F2013}.
For each of the trajectories we performed 1,000 permutations and considered the results as statistically significant for $p$-values less than $0.01$.
In almost all cases (63 out of 66) the trends were significant, such that we can expect them to be highly unlikely due to random chance.

\subsection{Group-wise Analysis of Longitudinal Trends}

\revised{
\begin{algorithm}
\caption{Transport of shape trajectory}
\label{alg:transport}
\begin{algorithmic}
\REQUIRE Pre-shapes $x_1,\cdots,x_n,\:Ref$
\ENSURE Transported pre-shapes $y_1,\cdots,y_n$
\STATE $y_1 \leftarrow Ref$
\FOR{$k = 1,\cdots,n-1$}
\STATE $v_k \leftarrow \Log_{x_k}x_{k+1}$
\STATE $y_{k+1} \leftarrow \Exp(y_k,ParTrans(y_k,v_k))$
\ENDFOR
\end{algorithmic}
\end{algorithm}
}

\revised{
In order to perform group-wise analysis of longitudinal shape changes we compare the estimated geodesic trends of the femoral trajectories.
This requires the consistent integration of intra- and inter-subject variability in order to obtain statistically significant localization of changes at the population level.
In fact, the comparison of longitudinal shape changes is usually performed after normalizing (i.e.\ transporting) them into a common system of coordinates (see~\citet{lorenzi2014poleladder} and the references therein).
Such a normalization can be realized by adapting parallel transport as presented in Algorithm~\ref{alg:transport}.
In particular, for geodesic trends this scheme reduces to parallel transport of the subject-specific velocity along the baseline-to-reference shape geodesic.
The group-wise longitudinal progression was modeled as the mean of the \revisedx{transported velocities}.
The areas of significant differences between longitudinal changes were investigated by two-sample Hotelling's $T^2$ tests on the vertex-wise displacements corresponding to the transported velocity-fields.
While the displacements differ significantly ($p<0.01$, \revisedx{ after Benjamini-Hochberg false discovery correction}) between normal controls and the OA groups (for \revisedx{55\% and 19\%} of the vertices for HH vs.\ HD and HH vs.\ DD, respectively), there are no differences in the longitudinal changes in-between both OA groups.
\Cref{fig:hotelling} shows a qualitative visualization of the group tests in terms of the magnitude of the difference between the group-wise means.
Visible are changes along the ridge of the cartilage plate (characteristic regions for osteophytic growth, cf.\ \Cref{fig:healthyVsOA}) in comparison of both HH vs.\ HD as well as HH vs.\ DD, albeit the latter are less pronounced suggesting a saturation of morphological developments.
Additionally, the changes are more developed on the medial compartment, which is in line with previous findings \citep{vincent2012pathophysiology}.
}
\begin{figure}[t]
 \begin{center}
 	\begin{overpic}[width=.9\textwidth]{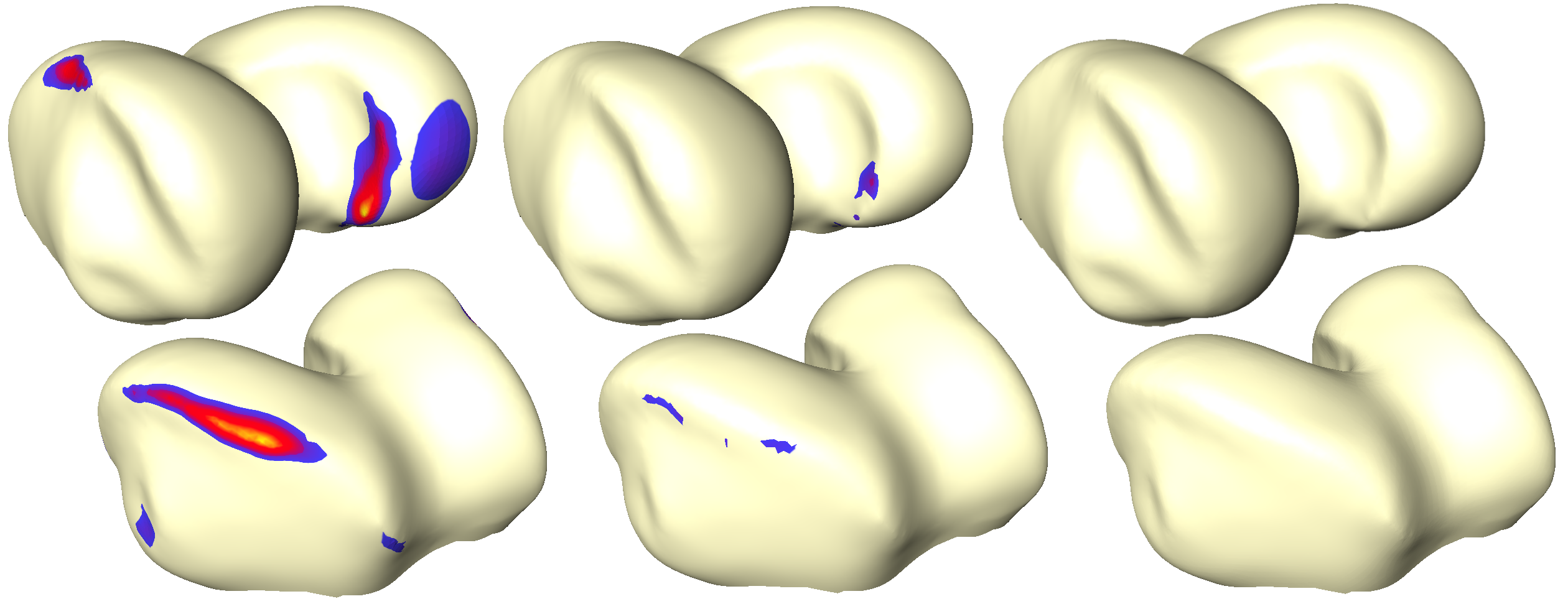}
		\put(-10,5){\rotatebox{90}{Anterior View}}
		\put(-10,70){\rotatebox{90}{Posterior View}}
		\put(40,160){HH vs.\ HD}
		\put(180,160){HH vs.\ DD}
		\put(300,160){HD vs.\ DD}
	\end{overpic}
	\caption{\revised{
	Group-wise analysis of femoral geodesic trends: Magnitude of differences between the group-average trends for HH vs.\ HD (left column), HH vs.\ DD (middle column), and HD vs.\ DD (right column) after transport to common reference shape. Only significantly different displacements ($p<0.01$) are shown (2.0e-4~\ShowColormap~4.2e-4).
	}}
 \label{fig:hotelling}
 \end{center}
\end{figure}
\revised{While velocities are constant for subject-specific geodesics, their parallel transport depends on the path (an effect called holonomy).
To investigate this path dependency, we repeated the above experiment using different paths for the HD group.
In particular, we chose the shape at the onset time (transition time to severe OA, viz.\ Kellgren--Lawrence score $\geq$ 3) as the initial point for the transport path.
In line with previous work \citep{lorenzi2013geodesics}, we found that the results are not sensitive to the path.
More precisely, the results of the group tests agreed for \revisedx{99.70\% and 100.00\%} of the vertices for HH vs.\ HD and HD vs.\ DD, respectively.
}

\section{Concluding Remarks}

This work presented characterizations of and computationally efficient methods for the determination of parallel transport, Jacobi fields and geodesic regression of data represented as shapes in Kendall's space.
Furthermore, an application to longitudinal statistical analysis of epidemiological data (femur data for analysis of knee osteoarthritis) has been shown.
An advantage of modeling trajectories by geodesics is the following: A main task
in longitudinal analysis is to translate trajectories to start at a reference shape.
The intermediate distances between the shapes of a geodesic are preserved by parallel transport, which is not the case for general transports.
Moreover, data inconsistencies are minimized by considering the best-fitting geodesics, and Jacobi fields can be employed to analyze the variability of the geodesics, hence providing a canonical descriptor of trends and differences for the trajectories.

There are many potential avenues for future work.
\revised{First, we would like to use the presented methodology within the mixed-effect framework (see e.g.~\citet{bone2018learning}), which provides a joined analysis of longitudinal and cross-sectional variability.}
In particular, group-wise means of the geodesics can be computed with respect to a natural metric in the tangent bundle (e.g.\ the Sasaki metric) to determine the group parameters as described in~\citet{MF2012}.
Second, an extension of the method to higher-dimensional longitudinal parameters instead of just time can be examined, to achieve even more differentiated results.
Third, spline regression poses a natural generalization providing more degrees of freedom.

\revised{On the application side, based on the results found, it can be said in summary that the shape trajectories
of the healthy subjects expose significantly different temporal changes than those found in groups with incident and developing OA.
Our analysis delivered detailed insights into the complex morphological changes that fit medical knowledge.}
It seems possible to make a correct assignment to one of the three groups based on just two measurements.
The aim of further investigations must be to substantiate this statement, by determining with what reliability a prediction can be made about the onset of knee osteoarthritis depending on
the baseline shape and trend as well as the sensitivity of the latter with respect to the number of observations made and the time intervals between them.

\section*{Acknowledgements}
\addcontentsline{toc}{section}{Acknowledgements}
This research was carried out in the framework of \textsc{Matheon} supported by the Einstein Foundation Berlin (E.~Nava-Yazdani),
DFG project \emph{In-vivo soft tissue kinematics from dynamic MRI} (C.~von Tycowicz), and the Freie Universit{\"a}t Berlin through the Excellence Initiative of the Deutsche Forschungsgemeinschaft (T.~J.~Sullivan).
Furthermore we are grateful for the open-access dataset provided by the OAI\footnote{The Osteoarthritis Initiative is a public-private partnership comprised of five contracts
(N01-AR-2-2258; N01-AR-2-2259; N01-AR-2-2260; N01-AR-2-2261; N01-AR-2-2262)
funded by the National Institutes of Health, a branch of the Department of Health and
Human Services, and conducted by the OAI Study Investigators.
Private funding partners
include Merck Research Laboratories; Novartis Pharmaceuticals Corporation,
GlaxoSmithKline; and Pfizer, Inc.
Private sector funding for the OAI is managed by the
Foundation for the National Institutes of Health.
This manuscript was prepared using an OAI
public use data set and does not necessarily reflect the opinions or views of the OAI investigators, the NIH, or the private funding partners.}
as well as for the open-source software Manopt~\citep{manopt}.

\bibliographystyle{abbrvnat}
\bibliography{shape_trj.bib}   
\addcontentsline{toc}{section}{References}

\end{document}